\numberwithin{equation}{section}
\newtheorem{theorem}{Theorem}[section]
\newtheorem{lemma}[theorem]{Lemma}
\newtheorem{result}[theorem]{Result}
\newtheorem{proposition}[theorem]{Proposition}
\theoremstyle{remark}
\newtheorem{remark}[theorem]{Remark}
\newtheorem{definition}[theorem]{Definition}
\begin{document}
	
	\title{a domain with non-plurisubharmonic $d$-balanced squeezing function }
	
	\author[N. Gupta]{Naveen Gupta}
	\address{Department of Mathematics, University of Delhi,
		Delhi--110 007, India}
	\email{ssguptanaveen@gmail.com}
	
	
	
	\begin{abstract}
In this note, we give an example of a domain whose $d$-balanced squeezing function is non-plurisubharmonic.
\end{abstract}

\keywords{squeezing function;
	quasi-balanced domains.}
\subjclass[2020]{32F45, 32H02}
\maketitle
	\section{introduction}
	We present an example of a domain in $\mathbb{C}^2$ whose $d$-balanced squeezing function fails to be plurisubharmonic. Let us first recall some related notions.

	For a bounded domain $D\subseteq \mathbb{C}^n$ and $z\in{D}$, Deng et al \cite{2012}  introduced  the squeezing function on $D$, denoted by $S_{D}$,  as follows:
	$$S_{D}(z):=\sup_f\{r:\mathbb{B}^n(0,r)\subseteq f(D), f\in{\mathcal{O}_u(D,\mathbb{B}^n)}\},$$ 
	where $\mathbb{B}^n(0,r)$ denotes a ball of radius $r$ centered at the origin and $\mathcal{O}_u(D,\mathbb{B}^n)$ denotes the collection of injective holomorphic maps from $D$ to unit ball $\mathbb{B}^n$.
	\smallskip
	
	Rong and Yang  \cite{gen}, extended this idea by replacing the unit ball with a bounded, balanced, convex domain. 
 Recall that a domain $\Omega\subseteq \mathbb{C}^n $ is called balanced if  $\lambda z$ belongs to ${\Omega} $
	for each $z$ in ${\Omega}$ and $\lambda$ belongs to  the closed unit disc  $\overline{\mathbb{D}}$ of the complex plane.  Its
Minkowski function $h_{\Omega}$  on 
	$\mathbb{C}^n$ is defined as 
	$$h_{\Omega}(z):=\inf \{t>0:z/t\in{\Omega}\}.$$
	For 
	a bounded domain $D\subseteq \mathbb{C}^n$ the generalized squeezing
	function $S_D^{\Omega}$ on $D$ is defined as
	\begin{equation*}\label{eqn:gensq}
		S^{\Omega}_D(z):=\sup \{r:\Omega(r)\subseteq f(D), f\in{\mathcal{O}_u(D,\Omega)}, f(z)=0\}.
	\end{equation*} 
	
	In \cite{d-balanced}, we introduced the $d$-balanced squeezing function by replacing a balanced domain by a $d$-balanced domain. .
	
	Let 
	$d=(d_1,d_2,\ldots,d_n)\in{\mathbb{Z}_n^{+}},n\geq 2$. Then a domain $\Omega\subseteq \mathbb{C}^n$
	is said to be $d$-balanced if for each $z=(z_1,z_2,\ldots, z_n)\in \Omega$
	and $\lambda \in{\overline{\mathbb{D}}}$, $\left(\lambda^{d_1}z_1,\lambda^{d_2}z_2,\ldots, 
	\lambda^{d_n}z_n\right) \in \Omega$.
	
	For a $d$-balanced domain $\Omega$,  the $d$-Minkowski function on $\mathbb{C}^n$,
is	denoted by $h_{d,\Omega}$ and is defined as 
	$$h_{d,\Omega}(z):=\inf \{t>0:\left(\frac{z_1}{t^{d_1}},\frac{z_2}{t^{d_2}},\ldots,
	\frac{z_n}{t^{d_n}}\right)\in{\Omega}\}.$$
	
	\begin{definition} 
		For a bounded domain $D\subseteq \mathbb{C}^n$, and a bounded, convex, $d=(d_1,d_2,\ldots, d_n)$-balanced domain $\Omega$, 
		\emph{ the $d$-balanced squeezing function} (denoted by $S_{d,D}^\Omega$)
		is given by:
		\begin{equation*}\label{eqn:gensq}
			S_{d,D}^\Omega(z):=\sup \{r:\Omega^d(r)\subseteq f(D), f\in{\mathcal{O}_u(D,\Omega)}, f(z)=0\}.
		\end{equation*}
	\end{definition}
	We can easily see that if $\Omega$ is balanced, $d=(1,1,\ldots, 1)$, therefore $S_{d,D}^{\Omega}$ reduces to $S_D^{\Omega}.$
	
	In \cite{nik-psh}, Forn\ae ss and Scherbina gave an example of a domain whose squeezing function is non-plurisubharmonic. Recently, Rong and Yang \cite{feng-rec},  gave examples of domains with non-plurisubharmonic  generalized squeezing functions. Here we consider the same problem for $d$-balanced squeezing functions and present an example (see Theorem \ref{thm:main}).
	
	\section{Background and an estimate of $d$-balanced squeezing function}
	
 Let us first recall the definitions of the Carathéodory pseudodistance and the Carathéodory extremal maps. 
	For a domain  $D\subseteq \mathbb{C}^n$ and $z_1,z_2\in D$, the Carathéodory pseudodistance  $c_{D}$ on $D$ is defined as 
	$$c_D(z_1,z_2)=\sup_f \{p(0,\mu):f\in{\mathcal{O}(\mathbb{D}, D), f(z_1)=0, f(z_2)=\mu}\},$$
	where $p$ denotes the Poincar\'e metric on unit disc $\mathbb{D}$. A function $f\in {\mathcal{O}(\mathbb{D}, D)} $ at which this supremum is attained is called the Carathéodory extremal function.  
	
	We now recall a few results that will be used in this section. Note that Lempert \cite[~Theorem 1]{lempert-classic}, Kosi\`{n}ski et al \cite[~Theorem 1.3]{lempert}
	and the Remark 1.6 therein yields the following.
	\begin{result}\label{res:lempert}
		For a convex domain $\Omega\subseteq \mathbb{C}^n,\ c_{\Omega}
		=\tilde{k}_{\Omega},$
		where $\tilde{k}_{\Omega}$ denotes the Lempert function on $\Omega$.
	\end{result}
Combining Result \ref{res:lempert} with \cite[~Theorem 1.6]{bharali}, we get the following result:
	
	\begin{result}\label{res:bharali}
		For a bounded, convex, $d$=$(d_1,d_2,\ldots,d_n)$-balanced domain $\Omega\subseteq \mathbb{C}^n$, 
		$$\tanh^{-1}h_{d,\Omega}(z)^L\leq c_{\Omega}(0,z)=\tilde{k}_{\Omega}(0,z)\leq \tanh^{-1}h_{d,\Omega}(z),$$
		where $L=\max_{1\leq i\leq n}d_i.$
	\end{result}
	
	\begin{result}\label{lem:extension}
		Let $\Omega\subseteq \mathbb{C}^n$ be a domain and $K\subseteq \Omega$ be compact such that $\Omega \setminus K$ is connected. Then, each holomorphic fucntion $f$ on $\Omega \setminus K$ extends to a holomorphic function on $\Omega$.
	\end{result}
\begin{result}[see Proposition 1 in\cite{nikolov-d}] \label{prop:psh}
	Let $\Omega\subseteq \mathbb{C}^n$ be a balanced domain and let $h_{d,\Omega}$ be its d-Minkowski function. Then $\Omega$ is pseudoconvex if and only if $h_{d,\Omega}$ is plurisubharmonic.
\end{result}
\begin{result}\label{res:basicminko}
	For a $d$-balanced domain $\Omega\subseteq \mathbb{C}^n$, the following holds:
	\rm{( see\cite[~Remark 2.2.14]{pflug})}
	\begin{enumerate}
		\item $\Omega =\{z\in{\mathbb{C}^n}:h_{d,\Omega}(z)<1\}$.
		\item $h_{d,\Omega}\left(\lambda^{d_1}z_1,\lambda^{d_2}z_2,\ldots, \lambda^{d_n}
		z_n\right)=|\lambda |h_{d,\Omega}(z)$ for each $z=(z_1,z_2,\ldots, z_n)\in{\mathbb{C}^n}$
		and $\lambda\in{\mathbb{C}}.$
		\item $h_{d,\Omega}$ is upper semicontinuous.
	\end{enumerate}
\end{result}

For a bounded domain $\Omega\subseteq \mathbb{C}^n$ and a compact subset $K$ of $\Omega$, denote
$$d_{c_{\Omega}}^{K}(z)=\min_{w\in K}\tanh (c_{\Omega}(z,w)). $$
	
	We begin with the following theorem for $d$-balanced domains, which is analogues to Theorem 2.1 in \cite{feng-rec}.

\begin{theorem}\label{thm:rel}
	Let $\Omega\subseteq \mathbb{C}^n$ be a bounded, $d$=$(d_1,d_2,\ldots, d_n)$-balanced, convex, homogeneous domain.
	 If $K$ is a compact subset of $\Omega$ such that $D=\Omega \setminus K$ is connected, then
	\begin{equation}\label{eqn:rel}
	S^d(z)^L\leq d_{c_{\Omega}}^{\partial K}(z)=d_{c_{\Omega}}^{K}(z)\leq S^d(z),
	\end{equation}
	where $L=\max_{1\leq i \leq n}d_i.$
\end{theorem}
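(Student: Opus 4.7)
The plan is to separate the three pieces of \eqref{eqn:rel}. For the middle equality $d_{c_\Omega}^{\partial K}(z) = d_{c_\Omega}^K(z)$, the direction $\leq$ is trivial, so I would suppose the minimum over $K$ is attained at an interior point $w_0$. By Result~\ref{res:lempert} there is a Lempert extremal disc $\varphi\colon \mathbb{D}\to \Omega$ with $\varphi(0) = z$, $\varphi(\lambda_0) = w_0$, and $|\lambda_0| = \tanh c_\Omega(z,w_0)$. Since $z \in D$ and $w_0 \in \operatorname{int}(K)$, continuity forces $\varphi$ to meet $\partial K$ at some $\lambda_1$ with $|\lambda_1| < |\lambda_0|$; the distance-decreasing property of $c_\Omega$ under $\varphi$ then gives $c_\Omega(z, \varphi(\lambda_1)) < c_\Omega(z,w_0)$, contradicting minimality.

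For the upper bound $d_{c_\Omega}^K(z) \leq S^d(z)$, homogeneity of $\Omega$ supplies an automorphism $\psi$ with $\psi(z) = 0$, so $\psi|_D \in \mathcal{O}_u(D, \Omega)$ sends $z$ to $0$. Result~\ref{res:bharali} yields $h_{d,\Omega}(\psi(w)) \geq \tanh c_\Omega(0, \psi(w)) = \tanh c_\Omega(z,w)$ for every $w \in K$, so $\psi(K) \subseteq \{h_{d,\Omega} \geq d_{c_\Omega}^K(z)\}$. Hence $\Omega^d(r) \subseteq \Omega \setminus \psi(K) = \psi(D)$ for every $r < d_{c_\Omega}^K(z)$, which forces $S^d(z) \geq r$; letting $r$ increase gives the claim.

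The main work is the lower bound $S^d(z)^L \leq d_{c_\Omega}^{\partial K}(z)$. Given $r < S^d(z)$, I would pick $f \in \mathcal{O}_u(D, \Omega)$ with $f(z) = 0$ and $\Omega^d(r) \subseteq f(D)$, and apply Result~\ref{lem:extension} coordinatewise to extend $f$ to $\tilde{f} \in \mathcal{O}(\Omega, \mathbb{C}^n)$. Boundedness of $\Omega$ gives $\tilde{f}(\Omega) \subseteq \overline{\Omega}$; a supporting-hyperplane plus maximum-principle argument on $\operatorname{Re} \ell \circ \tilde{f}$, with $\ell$ a linear functional supporting $\Omega$ at a putative boundary value, upgrades this to $\tilde{f}(\Omega) \subseteq \Omega$. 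I would next claim that $\tilde{f}(w) \notin \Omega^d(r)$ for every $w \in \partial K$: if instead $\tilde{f}(w) \in \Omega^d(r)$, choose a connected open $U \ni w$ with $\tilde{f}(U) \subseteq \Omega^d(r) \subseteq f(D)$ and form $\phi := f^{-1} \circ \tilde{f}\colon U \to D$. Because $w \in \partial K$, $U \cap D \neq \emptyset$, and $\phi$ restricts to the identity on $U \cap D$; the identity principle then forces $\phi \equiv \operatorname{id}_U$, producing the contradiction $w = \phi(w) \in D$. With the claim in hand, Result~\ref{res:bharali} applied to the self-map $\tilde{f}\colon \Omega \to \Omega$ with $\tilde{f}(z) = 0$ yields
\[
r^L \;\leq\; h_{d,\Omega}(\tilde{f}(w))^L \;\leq\; \tanh c_\Omega(0,\tilde{f}(w)) \;\leq\; \tanh c_\Omega(z,w)
\]
for each $w \in \partial K$; taking the minimum and letting $r \uparrow S^d(z)$ closes the argument.

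The principal obstacle is the identity-theorem step inside the lower bound: the Hartogs extension $\tilde{f}$ need not be injective on $K$, so its values there cannot be controlled by a direct image argument. The argument succeeds only because one can locally invert $f$ on $D$ and because every $w \in \partial K$ is accumulated by points of $D$, which together force $\phi$ to equal the identity on a nonempty open set and hence everywhere on $U$.
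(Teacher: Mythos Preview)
Your argument is correct and follows the same three-part skeleton as the paper: the upper bound via an automorphism sending $z$ to $0$, the lower bound via a Hartogs extension $\tilde f\colon\Omega\to\Omega$ together with the fact that $\tilde f(\partial K)$ misses $f(D)$, and the equality $d_{c_\Omega}^{\partial K}=d_{c_\Omega}^{K}$ treated separately. The differences are only in how two sub-steps are justified. For the equality the paper connects $z$ to an interior minimiser by a straight segment, while you run the same connectedness idea along a Lempert extremal disc so that the contraction $c_\Omega(\varphi(0),\varphi(\lambda_1))\le c_{\mathbb D}(0,\lambda_1)$ gives the needed strict inequality directly; this is cleaner. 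For the key claim that $\tilde f(w)\notin\Omega^d(r)$ for $w\in\partial K$, the paper simply records that $F(\partial K)$ and $F(D)$ are disjoint and derives a contradiction, whereas you supply the identity-theorem argument via $f^{-1}\circ\tilde f$ on a small neighbourhood of $w$; this is exactly the content the paper suppresses. Finally, for $\tilde f(\Omega)\subseteq\Omega$ the paper appeals to plurisubharmonicity of $h_{d,\Omega}$ (Result~\ref{prop:psh}) and \cite{feng-rec}, while your supporting-hyperplane route is an equivalent alternative for convex $\Omega$; just note that your sentence ``boundedness of $\Omega$ gives $\tilde f(\Omega)\subseteq\overline\Omega$'' already uses convexity, not merely boundedness.
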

\begin{proof}
	For $z\in D$, let $g\in{Aut(\Omega)}$ be such that $g(z)=0.$ Because of the onvexity of $\Omega$,
	$\{tv+(1-t)w:0\leq t\leq 1,\, v\in{D}, w\in K^{\circ}\}\cap \partial K\neq \emptyset,$  therefore, 
	$d_{C_{\Omega}}^{\partial K}(v)=d_{C_{\Omega}}^{K}(v)$ for each $v\in {D}.$
	
 Clearly, $h=g|_{D}:D\to \Omega$ is injective holomorphic with $h(z)=0$. For notational convenience, let us denote 
 $d_{C_{\Omega}}^{K}(z)$ by $\alpha$. We claim that $\Omega^d(\alpha)\subseteq h(D)$. Let $h_{d,\Omega}(v)<\alpha $, which upon using Result \ref{res:bharali} implies $\tanh (c_{\Omega}(0,v))<\alpha $. Since $g$ is an automorphism, we get $\tanh (c_{g(\Omega)}(g(z),g(v')))<\alpha$, $v'\in \Omega$. Therefore, $\tanh (c_{\Omega}(z,v'))<\alpha=\min_{w\in K}\tanh (c_{\Omega}(z,w))$. Thus, we get $v'\notin K$ and therefore, $v=g(v')\in g(D)$. This proves our claim and hence, we obtain $$S^d(z)\geq \alpha =d_{C_{\Omega}}^{K}(z).$$
 
 For the other side inequality, consider an injective holomorphic map $f:D\to \Omega$ such that $f(z)=0$. By Result \ref{lem:extension}, there exists a holomorphic function $F:\Omega \to \mathbb{C}^n$ such that $F|_{D}=f$. Using Result \ref{prop:psh} and following the argument as in \cite[~Theorem 2.1]{feng-rec}, we obtain $F(\Omega)\subseteq \Omega$. Observe that $F(\partial K)\cap F(D)\neq \emptyset$. Let $r>0$ be such that $\Omega^d(r)\subseteq F(D)$. If possible, let $\tanh (c_{\Omega}(0,F( \partial K)))^{1/L}<r$, then upon using Result \ref{res:bharali}, we get $h_{d,\Omega}(F(\partial  K))<r.$ This implies that $F(\partial  K)\in \Omega^d(r)\subseteq F(D)$, which is a contradiction. Therefore $r<\tanh (c_{\Omega}(0,F(\partial  K)))^{1/L}$, which upon using the decreasing property of $C_{\Omega}$ implies that $r<\tanh (c_{\Omega}(z, \partial K))^{1/L}$. Finally we can conclude that $	S^d(z)^L\leq d_{c_{\Omega}}^{ K}(z).$ \qedhere
\end{proof}

\begin{remark} \label{rem:observation}
	A careful look at the above proof makes it clear that the left hand side of inequality 
\ref{eqn:rel} holds even if $\Omega$ is not homogeneous.
\end{remark}

\section{Non-plurisubharmonic $d$-balanced squeezing function}

Let $G_2\subseteq \mathbb{C}^2$ be the domain defined by 
$$G_2=\{(z_1+z_2,z_1z_2):z_1,z_2\in \mathbb{D}\},$$ where $\mathbb{D}$ denotes the unit disc in $\mathbb{C}.$ The domain $G_2$ is called the symmetrized bidisc. Its genesis lies with the problem of `robust stabalization' in control engineering. Althought it is closely related to bidisc, its geometry is very different from the bidisc. This is polynomially convex, hyperconvex and starlike about the origin but not convex ($(2,1),\, (2i, -1)\ G_2$ but $(1+i,0)\notin G_2$). Another point to note here is that it is not homogeneous (there is no automorphism of $G_2$, which maps for any $0<a<1$, $(a,0)$ to $(0,0)$). For many equivalent characterisations of $G_2$, \cite{agler} can be referred to.  

It should be noted here that  domain $G_2$ has several interesting properties. For example, Lempert's theorem holds for $G_2$ even though it is neither convex nor  can it be exhausted by domains biholomorphic to convex domains. 

We will  require Carathéodory extremal maps for $G_2$ to prove our result. Agler and Young, in their paper \cite{agler}, proved that for each $z_1,z_2\in G_2$, there exists $\lambda\in \mathbb{C}, |\lambda|=1$ such that $\phi_{\lambda}$ is the Carathéodory extremal function, where $\phi_{\lambda}$ is defined as $$\phi(z_1,z_2)=\frac{2\lambda z_2-z_1}{2-\lambda z_2}.$$

It is easy to check that $G_2$ is $(1,2)$-balanced. Let us denote by $\Omega$ the set of all possible linear combinations of elements of $G_2$, that is, $\Omega$ is the convex hull of $G_2$. We begin with the following lemma.
\begin{lemma}\label{lem:(1,2)}
	 Let $\Omega$ be convex hull of $G_2$. Then, the following holds:
\begin{enumerate}
		\item $\Omega$ is $(1,2)$-balanced.
		\item For each $(z_1, z_2)\in \Omega$, $|z_1|<2$ and $|z_2|<1$.
	\end{enumerate}
\begin{proof} \mbox{} 
\begin{enumerate}
	\item Let $\sum_{i=1}^k\alpha_i z_i\in \Omega$, $\sum_{i=1}^k\alpha_i=1,\ \alpha_i\geq 0$  and $z_i=(z_i^{(1)},z_i^{(2)})\in G_2$ for each $i.$ Let $|\lambda |\leq 1$. Each $z_i\in G_2$; therefore, $(\lambda z_i^{(1)},\lambda^2 z_i^{(2)})\in G_2$. Thus, $\sum_{i=1}^k\alpha_i (\lambda z_i^{(1)},\lambda^2 z_i^{(2)})=\left(\lambda \sum_{i=1}^k\alpha_i z_i^{(1)},\lambda^2 \sum_{i=1}^k\alpha_i z_i^{(2)}\right)\in \Omega$ and hence, $\Omega$ is $(1,2)$-balanced. 
	\item Follows from the structure of $G_2$.
	\end{enumerate}
\end{proof}
\end{lemma}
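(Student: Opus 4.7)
The plan is to handle the two parts separately, both by reducing to elementary computations about convex combinations. For part (1), I would start with an arbitrary point $w = \sum_{i=1}^{k} \alpha_i z_i \in \Omega$, where $z_i = (z_i^{(1)}, z_i^{(2)}) \in G_2$ and the $\alpha_i$ are nonnegative weights summing to $1$. Given $\lambda \in \overline{\mathbb{D}}$, I need to exhibit the $(1,2)$-scaled point $(\lambda w^{(1)}, \lambda^2 w^{(2)})$ as an element of $\Omega$. The key observation is that $G_2$ itself is $(1,2)$-balanced (noted just before the lemma and verified immediately from the defining formula $(z_1+z_2, z_1 z_2)$), so each $(\lambda z_i^{(1)}, \lambda^2 z_i^{(2)})$ is in $G_2$. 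Then linearity of the componentwise scaling lets me pull the scalars $\lambda$ and $\lambda^2$ through the convex combination, producing the desired point as a convex combination of elements of $G_2$.

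For part (2), I would again start from a convex combination $w = \sum_{i=1}^{k} \alpha_i (w_1^{(i)}, w_2^{(i)})$ of points of $G_2$. For each summand, write $w_1^{(i)} = \zeta_1^{(i)} + \zeta_2^{(i)}$ and $w_2^{(i)} = \zeta_1^{(i)} \zeta_2^{(i)}$ with $|\zeta_1^{(i)}|, |\zeta_2^{(i)}| < 1$. The triangle inequality then yields $|w_1^{(i)}| < 2$ and $|w_2^{(i)}| < 1$. Applying the triangle inequality once more to the outer convex combination and using $\sum \alpha_i = 1$ gives $|w_1| < 2$ and $|w_2| < 1$ in one line each.

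Neither part presents a real obstacle; the only minor care point is the strict inequalities in part (2). These survive the convex combination because each summand satisfies a strict bound and the weights are a probability vector, so the sum is bounded by a strict convex combination of numbers strictly less than the target, which remains strictly less than the target. No heavy machinery is needed, and nothing beyond the definition of $G_2$, the definition of convex hull, and elementary estimates enters the argument.
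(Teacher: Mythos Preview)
Your proposal is correct and follows essentially the same approach as the paper: part (1) is argued identically via convex combinations and the $(1,2)$-balancedness of $G_2$, and your part (2) simply makes explicit the elementary triangle-inequality argument that the paper compresses into the phrase ``follows from the structure of $G_2$.''
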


Choose $0<r<1$ such that closure of the polydisk in $\mathbb{C}^2$ of radius $r$ centered at the origin, denoted by $\overline{\mathbb{D}^2(0,r)}$, is contained in $\Omega$. Take $Q=(0,r)\in \overline{\mathbb{D}^2(0,r)} \subseteq  \Omega$ and let $\epsilon >0$ be such that a ball of radius $\epsilon<r$ centered at $Q$, denoted by $\mathbb{B}^2(Q,r)$, is contained in  $\Omega$. Let us take $K=\partial \mathbb{D}^2(0,r)\setminus \mathbb{B}^2(Q,\epsilon)$. It can be seen that $K$ is compact and $D=\Omega \setminus K$ is connected. We will show that $S_{d,D}^{\Omega}$ (denoted by $S^d$ for notational convenience) is not plurisubharmonic. For this, we will show that $S^d|_{\mathbb{D}^n(0,r)\cap H}$ does not satisfy the maximum principle. 
In particular, such a restriction is not subharmonic; this, in turn, will imply that $S^d$ is not plurisubharmonic. We will prove this via the following steps:

\begin{itemize}
	\item We begin by showing that $S^d(0)\geq \frac 1 2$.
	\item We use Theorem \ref{thm:rel} to show that $S^d(z)^2\leq \dfrac{r-|z|}{2-r|z|} $ for $z\in \mathbb{D}^n(0,r)\cap H,\ z\neq 0$ (observe Remark \ref{rem:observation}).
	\item We then show that $S^d(z)\leq S^d(0)$ for $z=(z_1,0)\in \mathbb{D}^n(0,r)\cap H$ for $r>|z|>\beta$, where $\beta=\frac{r(4-r)}{4-r^3}$ (note that $\beta<r$) using some calculations.
	\item Now we restrict $h$ to $A=\overline{\mathbb{B}(0,\beta)}$ to obtain a maximum at some $a\in A $. 
	\item We conclude by combining all these points along with the observation that $S^d(z)\to 0$ as $|z|\to r$.
\end{itemize}

\begin{lemma}\label{lem:0}
	$S^d(0)\geq \frac 1 2$.
\end{lemma}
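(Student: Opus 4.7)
The plan is to produce an injective holomorphic map $f\colon D\to\Omega$ with $f(0)=0$ and $\Omega^d(\tfrac12)\subseteq f(D)$; by definition of the $d$-balanced squeezing function this will force $S^d(0)\ge\tfrac12$. The simplest and most natural candidate is the inclusion $\iota\colon D\hookrightarrow\Omega$, which is trivially injective holomorphic and satisfies $\iota(0)=0$, so $\iota(D)=D$ and the task collapses to the set-theoretic statement $\Omega^d(\tfrac12)\cap K=\emptyset$.

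To analyze $\Omega^d(\tfrac12)$ I will invoke the $(1,2)$-homogeneity of $h_{d,\Omega}$ (Result \ref{res:basicminko}(2)) with $\lambda=1/2$ to obtain the explicit description
$$\Omega^d(\tfrac12)=\bigl\{(z_1/2,\,z_2/4):(z_1,z_2)\in\Omega\bigr\},$$
after which Lemma \ref{lem:(1,2)}(2) forces every point of $\Omega^d(\tfrac12)$ to satisfy $|z_1|<1$ and $|z_2|<1/4$. Meanwhile every $w\in K$ lies on $\partial\mathbb{D}^2(0,r)$ and so satisfies $\max(|w_1|,|w_2|)=r<1$, while the ball $\mathbb{B}^2(Q,\epsilon)$ around $Q=(0,r)$ has been excised from $K$. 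I would then split into cases according to which face of $\partial\mathbb{D}^2(0,r)$ contains $w$: on the face $\{|w_2|=r\}$ the coordinate bound $|z_2|<1/4$ already eliminates overlap once $r\ge 1/4$, and the remaining overlap on $\{|w_2|=r<1/4\}$ is forced into the excised ball around $Q$ by proximity of $w_2$ to $r$.

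The main obstacle is the face $\{|w_1|=r,\ |w_2|\le r\}$ with $r<1$: the coarse inequality $|z_1|<1$ on $\Omega^d(\tfrac12)$ does not rule out $|w_1|=r$, so a finer lower estimate on $h_{d,\Omega}$ along this face is required in order to conclude $h_{d,\Omega}(w)\ge\tfrac12$. I expect this sharper estimate to come from the specific convex-hull structure $\Omega=\mathrm{conv}(G_2)$ together with the Agler–Young description of $G_2$, exploiting the fact that $\Omega$ is strictly contained in $\{|z_1|<2\}$ and that the cross-section $\Omega\cap\{z_2=0\}$ is governed by the symmetric-function origin of $G_2$. If the inclusion $\iota$ alone does not suffice, the fallback is to replace $\iota$ by its composition with the $(1,2)$-balanced contraction $(z_1,z_2)\mapsto(\mu z_1,\mu^2 z_2)$ for a suitably chosen $\mu\in(0,1]$, which is still injective and fixes the origin, trading a controlled shrinkage for guaranteed disjointness between the image of $K$ and $\Omega^d(\tfrac12)$.
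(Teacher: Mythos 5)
Your starting point is the same as the paper's: take the inclusion $D\hookrightarrow\Omega$ (the identity map), note that it is injective, holomorphic and fixes $0$, and reduce the lemma to a set-theoretic disjointness statement. But the obstacle you then name is fatal to the route you set up, and neither of your proposed repairs closes it. From $\Omega^d(1/2)=\{(z_1/2,\,z_2/4):z\in\Omega\}$ and Lemma \ref{lem:(1,2)}(2) you only get $|z_1|<1$ and $|z_2|<1/4$ on $\Omega^d(1/2)$, and this genuinely fails to exclude $K$: since $(c,0)\in G_2\subseteq\Omega$ for every $|c|<1$, the point $(r,0)$ satisfies $h_{d,\Omega}((r,0))\le r$, so whenever $r<1/2$ (an admissible choice of $r$) the point $(r,0)$ lies in $\Omega^d(1/2)\cap K$. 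Hence no ``finer lower estimate'' along the face $\{|w_1|=r\}$ exists, and $\Omega^d(1/2)\not\subseteq D$. The fallback of composing with $(z_1,z_2)\mapsto(\mu z_1,\mu^2 z_2)$ only makes matters worse: the image of $D$ then sits inside $\Omega^d(\mu)$ with a rescaled copy of $K$ removed, so you would need $\mu\ge 1/2$ while the rescaled $K$ moves toward the origin. (Your treatment of the face $\{|w_2|=r\}$ for $r<1/4$ has the same problem: points such as $(r/2,r)$ need not lie in the excised ball about $Q=(0,r)$.)

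What you have in fact uncovered is that the constant in the statement is a misprint: the paper's own proof establishes $S^d(0)\ge r/2$, not $1/2$, and it is $r/2$ that is used later (in the proof of Theorem \ref{thm:main} one reads $S^d(z)<r/2\le S^d(0)$). With the corrected target $\Omega^d(r/2)\subseteq D$ your setup closes immediately: if $h_{d,\Omega}(z)<r/2$ then $(2z_1/r,\,4z_2/r^2)\in\Omega$ by Result \ref{res:basicminko}, so Lemma \ref{lem:(1,2)}(2) gives $|z_1|<r$ and $|z_2|<r^2/4<r$, hence $z\notin\partial\mathbb{D}^2(0,r)\supseteq K$. As written, though, your proposal does not prove the stated inequality, and the missing ``finer estimate'' it relies on cannot be supplied.
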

\begin{proof}
	Consider the identity map $\mathbbm{id}:D\to \Omega$. Clearly,  $\mathbbm{id}$ is injective holomorphic with $\mathbbm{id}(0)=0$. We claim that $\Omega^d(r/2)\subseteq \mathbbm{id}(D)=D$. To see this, take $z$ such that $h_{d,\Omega}(z)<r/2.$ Upon using  Result \ref{res:basicminko}, we first get $z\in \Omega$ and that $\left(\frac{2z_1}{r}, \frac{4z_2}{r^2}\right)\in \Omega.$ Now using Lemma \ref{lem:(1,2)}(2), we get $|z_1|<r$ and $|z_2|<\frac{r^2}{4}<r$. Thus, $z\notin \partial \mathbb{D}^2(0,r)$ and therefore, $z\notin K.$ Hence we prove our claim and this shows that $$S^d(0)\geq \frac 1 2. $$\qedhere \end{proof}
We need the following elementary lemma to prove our next proposition.
\begin{lemma}\label{lem:z}
For each $z=(z_1,0)$, $z_1=a+ib$ with $0<|z_1|<r$ and $w_0=(sz_1,0)$, where $s=\frac{r}{\sqrt{a^2+b^2}}$, the following holds:
$$\left|\frac{\phi_{\tau}(z)-\phi_{\tau}(w_0)}{1-\phi_{\tau}(z)\phi_{\tau}(w_0)}\right| \leq \dfrac{r-|z_1|}{1-r|z_1|},$$
where  $\tau\in \mathbb{C}$ with $|\tau|=1$. 
\end{lemma}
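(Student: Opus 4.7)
My plan is to exploit the fact that both $z$ and $w_0$ have vanishing second coordinate, which causes the map $\phi_\tau$ to collapse in a way that eliminates $\tau$ entirely. First, substituting into the definition $\phi_\tau(z_1,z_2)=(2\tau z_2-z_1)/(2-\tau z_2)$ with $z_2=0$ gives $\phi_\tau(z)=-z_1/2$, and similarly $\phi_\tau(w_0)=-sz_1/2$, both independent of $\tau$. Since $s=r/|z_1|$, the modulus of $sz_1$ equals $r$.

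Next I would handle the numerator and denominator separately. The numerator is
$$|\phi_\tau(z)-\phi_\tau(w_0)|=\tfrac{1}{2}|z_1||s-1|=\tfrac{1}{2}(r-|z_1|),$$
using $|z_1|(s-1)=r-|z_1|>0$. For the denominator, note that $\phi_\tau(z)\phi_\tau(w_0)=sz_1^2/4$ has modulus $s|z_1|^2/4=r|z_1|/4<1/4$, so the reverse triangle inequality yields
$$|1-\phi_\tau(z)\phi_\tau(w_0)|\geq 1-\tfrac{r|z_1|}{4}>0.$$

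Combining the two bounds gives
$$\left|\frac{\phi_\tau(z)-\phi_\tau(w_0)}{1-\phi_\tau(z)\phi_\tau(w_0)}\right|\leq\frac{(r-|z_1|)/2}{1-r|z_1|/4}=\frac{2(r-|z_1|)}{4-r|z_1|}.$$
The proof then reduces to the elementary inequality $\dfrac{2(r-|z_1|)}{4-r|z_1|}\leq\dfrac{r-|z_1|}{1-r|z_1|}$, which after cancelling the positive factor $r-|z_1|$ and cross-multiplying becomes $2(1-r|z_1|)\leq 4-r|z_1|$, i.e.\ $r|z_1|\geq -2$, which is trivially satisfied.

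I do not anticipate any real obstacle; the only thing worth flagging is that the collapse of $\phi_\tau$ on the slice $\{z_2=0\}$ is what turns a two-variable, $\tau$-dependent estimate into a one-variable Schwarz-type computation, and that the reverse triangle inequality is legitimate because $r|z_1|/4<1$.
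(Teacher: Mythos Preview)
Your computation is internally correct, but it rests on reading the displayed formula $\phi_\tau(z_1,z_2)=(2\tau z_2-z_1)/(2-\tau z_2)$ literally. That formula is a misprint: the Agler--Young Carath\'eodory extremal for $G_2$ has $z_1$, not $z_2$, in the denominator, i.e.\ $\phi_\tau(z_1,z_2)=(2\tau z_2-z_1)/(2-\tau z_1)$, and the paper's own proof of this lemma uses precisely that corrected form. With the intended map one has $\phi_\tau(z_1,0)=-z_1/(2-\tau z_1)$, which genuinely depends on $\tau$, so your ``collapse on the slice $\{z_2=0\}$'' shortcut is not available and the problem does not reduce to a $\tau$-free Schwarz estimate.

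The paper's argument instead puts the two fractions over the common denominator $(2-\tau z_1)(2-\tau s z_1)$, obtaining
\[
\left|\frac{\phi_\tau(z)-\phi_\tau(w_0)}{1-\phi_\tau(z)\phi_\tau(w_0)}\right|
=\frac{2|z_1-sz_1|}{\bigl|(2-\tau z_1)(2-\tau s z_1)-s z_1^{2}\bigr|},
\]
and then bounds the denominator from below via $|(2-\tau z_1)(2-\tau s z_1)|\ge(2-r)^2>1$ together with the reverse triangle inequality, giving $\ge 1-|z_1|\,|sz_1|=1-r|z_1|$. Your approach buys simplicity only because of the typo; the paper's approach is what is actually needed for the intended $\phi_\tau$. (As an aside, the paper's final displayed equality silently drops a factor of $2$ in the numerator; the estimate it justifies is $2(r-|z_1|)/(1-r|z_1|)$, which still tends to $0$ as $|z_1|\to r$ and, after adjusting the constant $\beta$, suffices for the subsequent argument.)
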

\begin{proof}
First note that $|w_0|=r$ and $|(2-\tau z_1)(2-\tau w_0)|\geq (2-r)^2>1$. Now consider 
\begin{align*}
	\left|\frac{\phi_{\tau}(z)-\phi_{\tau}(w_0)}{1-\phi_{\tau}(z)\phi_{\tau}(w_0)}\right|&=\left|\frac{\frac{z_1}{2-\tau z_1}-\frac{w_0}{2-\tau w_0}}{1-\frac{z_1}{2-\tau z_1}\frac{w_0}{2-\tau w_0}}\right|\\
	&=\left|\frac{\frac{2(z_1-w_0)}{(2-\tau z_1)(2-\tau w_0)}}{1-\frac{z_1}{2-\tau z_1}\frac{w_0}{2-\tau w_0}}\right|\\
	&\leq \frac{2|z_1-w_0|}{1-|z_1||w_0|}\\
	&=\frac{r-|z_1|}{1-|z_1|r} \qedhere
\end{align*}
\end{proof}
\begin{proposition}\label{prop:z}
	For each $z=(z_1,0)\in \mathbb{D}^n(0,r)\cap H,\ z\neq 0$, we have $$S^d(z)^2\leq \dfrac{r-|z_1|}{1-r|z_1|}.$$
\end{proposition}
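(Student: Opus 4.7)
The plan is to invoke the left-hand inequality of Theorem \ref{thm:rel}, which by Remark \ref{rem:observation} remains valid for our (non-homogeneous) $\Omega$. Since $L = \max(1,2) = 2$, this reduces the task to estimating $\tanh(c_\Omega(z,w))$ at a single well-chosen boundary point:
\[
S^d(z)^2 \;\le\; d_{c_\Omega}^K(z) \;=\; \min_{w \in K}\tanh(c_\Omega(z,w)),
\]
so it suffices to exhibit one test point $w_0 \in K$ for which $\tanh(c_\Omega(z,w_0)) \le (r-|z_1|)/(1-r|z_1|)$.

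The natural candidate, aligned with the setup of Lemma \ref{lem:z}, is $w_0 = (sz_1, 0)$ with $s = r/|z_1|$; note that $s$ is well defined because $z_1 \ne 0$, and $|sz_1| = r$. I would first verify $w_0 \in K$: clearly $w_0 \in \partial \mathbb{D}^2(0,r)$, and the Euclidean distance from $w_0$ to $Q = (0,r)$ equals $\sqrt{r^2 + r^2} = r\sqrt{2}$, which exceeds $\epsilon$ since $\epsilon < r$; hence $w_0 \notin \mathbb{B}^2(Q, \epsilon)$.

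The next step is to bridge from $\Omega$ to $G_2$, where the Agler--Young extremals are available. Since $\Omega$ is the convex hull of $G_2$, we have $G_2 \subseteq \Omega$, and both $z = (z_1,0)$ and $w_0 = (sz_1,0)$ lie in $G_2$ (each is of the form $(\zeta + 0, \zeta \cdot 0)$ with $\zeta \in \mathbb{D}$, given that $|z_1|, |sz_1| < 1$). Distance-decreasing property of the Carath\'eodory pseudodistance under inclusion then yields
\[
c_\Omega(z, w_0) \;\le\; c_{G_2}(z, w_0).
\]
By the Agler--Young result recalled above, the Carath\'eodory extremal for the pair $(z, w_0)$ in $G_2$ is $\phi_\tau$ for some $\tau$ with $|\tau|=1$, whence $\tanh(c_{G_2}(z,w_0))$ coincides with the quantity estimated in Lemma \ref{lem:z}; applying that lemma with this particular $\tau$ delivers $\tanh(c_{G_2}(z,w_0)) \le (r-|z_1|)/(1-r|z_1|)$, and chaining all the inequalities concludes the proof.

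The main obstacle is that the Carath\'eodory distance of the non-homogeneous convex hull $\Omega$ is not directly tractable, whereas the explicit extremals $\phi_\tau$ are only available on $G_2$. The monotonicity $c_\Omega \le c_{G_2}$ on the common subdomain is precisely what allows us to import the explicit one-variable estimate of Lemma \ref{lem:z} back to $\Omega$; the only concrete geometric verification along the way is that the chosen $w_0$ indeed sits in $K$ rather than in the small ball $\mathbb{B}^2(Q, \epsilon)$ that has been excised from $\partial \mathbb{D}^2(0,r)$.
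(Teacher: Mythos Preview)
Your proposal is correct and follows essentially the same route as the paper's proof: apply the left inequality of Theorem~\ref{thm:rel} (via Remark~\ref{rem:observation}) with $L=2$, choose the test point $w_0=(sz_1,0)$ with $s=r/|z_1|$, pass from $c_\Omega$ to $c_{G_2}$ by monotonicity under the inclusion $G_2\subseteq\Omega$, and finish with the Agler--Young extremal and Lemma~\ref{lem:z}. Your verification that $w_0\in K$ (computing $\lvert w_0-Q\rvert=r\sqrt{2}>\epsilon$) is in fact more explicit than the paper's.
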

\begin{proof}
	Let $z_1=a+ib$. Take $w_0=(sz_1,0)$, where $s=\frac{r}{\sqrt{a^2+b^2}}$ so that $w\in K$  because $Q<r.$
	Now, using Theorem \ref{thm:rel} and Remark \ref{rem:observation}, we obtain 
	\begin{align*}
	S^d(z)^2&\leq d_{c_{\Omega}}^K(z)\\
	&=\min_{w\in K}\tanh (c_{\Omega}(z,w))\\
	&\leq \tanh (c_{\Omega}(z,w_0))\\
    &\leq \tanh (c_{G_2}(z,w_0))\ \ \ \ \ \ \ \ \ \ \ \ \ \ \ \ \ \ \ \ \ \ \ \ \ \ \ \ \ \ \ \ \ \ \ \ \ \ \ \ \ \ \ \ \ \  (\mbox{since $G_2\subseteq \Omega$}) \\
	&=\left|\frac{\phi_{\tau}(z)-\phi_{\tau}(w_0)}{1-\phi_{\tau}(z)\phi_{\tau}(w_0)}\right| \ \ \ \ \ \ \ \ \ \ \ \ \ \ \ \ \ \ \ \ \ \ \ \ \ \ \ \ \ \ \ \ \ \ \ \ \ \ \ \ \ (\mbox{for some $|\tau|\leq 1$})\\
	&\leq \dfrac{r-|z_1|}{1-r|z_1|}.\ \ \ \ \ \ \ \ \ \ \ \ \ \ \ \ \  \ \ \ \ \ \ \ \ \ \ \ \ \ \ \ \ \ \ \ \ \ \ \ \ \ \ \ \ \ \ \ \ \ \ \ \ (\mbox{using Lemma \ref{lem:z}}) \qedhere
\end{align*}
\end{proof}
This proposition, in particular, implies $S^d(z)\to 0$ as $|z_1|\to r$. We summarise these results with the following theorem.
\begin{theorem} \label{thm:main} 
For $D$ and $\Omega$ as considered above, $S^d$ is not plurisubharmonic.
\end{theorem}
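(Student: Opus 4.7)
The plan is to assume for contradiction that $S^d$ is plurisubharmonic on $D$ and derive a violation of the maximum principle by restricting to a complex line. Take $H = \{z_2 = 0\}$ and put $h(z_1) := S^d(z_1, 0)$ on the disk $\mathbb{D}(0, r)$, which lies inside $D$ because $K \subset \partial\mathbb{D}^2(0,r)$ is disjoint from the open polydisk. If $S^d$ is plurisubharmonic then $h$ is subharmonic on $\mathbb{D}(0, r)$, so it suffices to exhibit an interior point at which $h$ attains its supremum over $\mathbb{D}(0,r)$, while simultaneously verifying that $h(z_1) \to 0$ as $|z_1| \to r$.

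I would assemble the four ingredients from the outline as follows. First, Lemma \ref{lem:0} supplies $h(0) = S^d(0) \geq 1/2$. Second, Proposition \ref{prop:z} gives $h(z_1)^2 \leq \frac{r - |z_1|}{1 - r|z_1|}$, which immediately forces $h(z_1) \to 0$ as $|z_1| \to r$. Third, an elementary one-variable inequality pins down a threshold $\beta < r$ (explicitly $\beta = \frac{r(4-r)}{4-r^3}$) such that $h(z_1) \leq 1/2$ for $|z_1| \geq \beta$; combined with $h(0) \geq 1/2$ this yields $h(z_1) \leq h(0)$ on the annulus $\beta < |z_1| < r$. Fourth, since $S^d$ is upper semicontinuous, the restriction of $h$ to the compact set $A = \overline{\mathbb{B}(0,\beta)}$ attains its maximum at some $a \in A$, and necessarily $h(a) \geq h(0) \geq 1/2$.

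Putting the four pieces together, for every $z_1 \in \mathbb{D}(0, r)$ one has $h(z_1) \leq h(a)$: this holds on $A$ by the choice of $a$, while on the annulus $\beta < |z_1| < r$ it holds because $h(z_1) \leq 1/2 \leq h(a)$. Since $|a| \leq \beta < r$, the subharmonic function $h$ on the connected open disk $\mathbb{D}(0, r)$ attains its supremum at an interior point, and the strong maximum principle then forces $h$ to be constant on $\mathbb{D}(0, r)$. This contradicts $h(z_1) \to 0$ as $|z_1| \to r$ together with $h(a) \geq 1/2$, so $S^d$ cannot be plurisubharmonic.

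The main technical point is the algebraic calibration of $\beta$ so that the estimate from Proposition \ref{prop:z} falls below $1/2$ precisely on the outer annulus, together with the bookkeeping that $\mathbb{D}(0,r)\times\{0\}\subset D$ and that $h$ is upper semicontinuous. Once Lemma \ref{lem:0} and Proposition \ref{prop:z} are in place, the conceptual step — restricting to a complex line, using an interior maximum on $A$, and invoking the strong maximum principle for subharmonic functions — is essentially immediate.
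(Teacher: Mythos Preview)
Your proposal is correct and follows essentially the same route as the paper: restrict $S^d$ to the line $\{z_2=0\}$, use Lemma~\ref{lem:0} and Proposition~\ref{prop:z} to force the values on the outer annulus $\beta<|z_1|<r$ below $h(0)$, take an interior maximum on $\overline{\mathbb{B}(0,\beta)}$, and conclude that the maximum principle fails. The only cosmetic difference is that you make the contradiction explicit via the strong maximum principle together with $h(z_1)\to 0$, whereas the paper simply records $h(z)<\max(h(a),h(0))$; note also that the specific $\beta=\frac{r(4-r)}{4-r^3}$ you quote is calibrated to the inequality $\frac{r-|z_1|}{1-r|z_1|}<\frac{r^2}{4}$, so what one actually obtains on the annulus is $h(z_1)<r/2\leq h(0)$ rather than merely $\leq 1/2$, but this does not affect the argument.
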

\begin{proof} 
Note that $\dfrac{r-|z_1|}{1-r|z_1|}<\dfrac{r^2}{4}$ if and only if $|z_1|>\beta$. Therefore, using Proposition \ref{prop:z} and Lemma \ref{lem:0}, we get $h(z)=S^d(z)<\dfrac{r}{2}\leq S^d(0)=h(0)$ for $z=(z_1,0)$ with $|z_1|>\beta$. Consider the restriction  $h|_{\overline{\mathbb{B}(0,\beta)}}$ and let $h(a)$ be its maximum for some $a\in{\mathbb{B}(0,\beta)}.$ Then, $h(z)<\max(h(a),h(0)),$ proving that $S^d|_{\mathbb{D}^n(0,r)\cap H}$  does not satisfy the maximum princple and $S^d$ is not plursubharmonic.
\end{proof}
\medskip

\section*{Acknowledgement} I thank my thesis advisor Sanjay Kumar for reading the manuscript and suggesting changes.

\end{document}